\newcommand{\uHom}{\operatorname{\underline{Hom}}}
\newcommand{\Ind}{\operatorname{Ind}}
\newcommand{\End}{\operatorname{End}}
\newcommand{\Coker}{\operatorname{Coker}}
\newcommand{\Ab}{\operatorname{Ab}}
\renewcommand{\Vec}{\operatorname{\mathbf{Vec}}}
\newcommand{\by}{\xrightarrow}
\newcommand{\iso}{\by{\sim}}
\newcommand{\sA}{\mathcal{A}}
\newcommand{\sB}{\mathcal{B}}
\newcommand{\sE}{\mathcal{E}}
\newcommand{\Q}{\mathbf{Q}}
\newcommand{\fS}{\mathfrak{S}}
\renewcommand{\phi}{\varphi}
\newcommand{\un}{\mathbf{1}}
\newcounter{spec}
\newenvironment{thlist}{\begin{list}{\rm{(\roman{spec})}}%
{\usecounter{spec}\labelwidth=20pt\itemindent=0pt\labelsep=10pt}}%
{\end{list}}
\newtheorem{lemma}{Lemma}[section]
\newtheorem{prop}[lemma]{Proposition}
\newtheorem{thm}[lemma]{Theorem}
\newtheorem{cor}[lemma]{Corollary}
\theoremstyle{definition}
\newtheorem{defn}[lemma]{Definition}
\theoremstyle{remark}
\newtheorem{rk}[lemma]{Remark}
\newtheorem{rks}[lemma]{Remarks}
\newtheorem{exs}[lemma]{Examples}
\begin{document}

\title{Exactness and faithfulness of monoidal functors}
\author{Bruno Kahn}
\address{IMJ-PRG\\ Case 247\\4 place Jussieu\\75252 Paris Cedex 05\\France}
\email{bruno.kahn@imj-prg.fr}
\date{December 2, 2021}
\begin{abstract}Inspired by recent work of Peter O'Sullivan, we give a condition under which a faithful monoidal functor between abelian $\otimes$-categories is exact.\end{abstract}
\subjclass{18M05}
\keywords{Monoidal categories}
\maketitle

\section*{Introduction} A well-known result of Deligne (\cite[Prop. 1.19]{dm}, \cite[Cor. 2.10]{dfest}, \cite[remark in 0.9]{dtens}) says that an exact $\otimes$-functor from a rigid abelian symmetric monoidal category in which $\End(\un)$ is a field to a (nonzero) abelian symmetric monoidal category is faithful.  Coulembier recently proved that the converse is true, by topos-theoretic methods (\cite[Cor. 4.4.4]{coul1}, see also \cite[Th. 2.4.1]{coul2}).  In \cite[Lemma 10.7]{os}, O'Sull\-iv\-an gives a different proof of this converse in the special case of super Tannakian categories. These converses are remarkable, as they give a drastically simple way of filling the gap in Saavedra's proof that Tannakian categories are classified by gerbes (see \cite[end of Question (3.15)]{dm}).

The aim of this note is to generalise Deligne's and O'Sullivan's results by weakening their hypotheses, in particular, removing a rigidity hypothesis in the second case. The first case (Proposition \ref{p1}) is very simple and is a footnote to Deligne's argument; the second, Theorem \ref{p2} d), is more elaborate.  Parts of this work apply to the quasi-abelian situation of \cite{andreslope}, see Example \ref{ex1} and Remark \ref{r2}.

The condition for a monoidal functor $F$ to preserve monomorphisms or epimorphisms turns out to be all-pervasive here. The novel fact is the existence of a notion of super-dimension in a very general context (Definition \ref{d4} and Lemma \ref{l4}): it plays a key rôle in the proof of Theorem \ref{p2}.

\section{On Corollary 2.10 of \cite{dfest}}

\begin{lemma}\label{l5} Let $\sA,\sB$ be two monoidal categories, with $\sA$ rigid, and let $F:\sA\to \sB$ be a strong monoidal functor. Then $F$ is faithful $\iff$ $F$ separates morphisms with domain $\un$ $\iff$ $F$ separates morphisms with codomain $\un$.
\end{lemma}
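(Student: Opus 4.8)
The two implications from faithfulness are immediate, since separating morphisms with domain (resp. codomain) $\un$ is just the injectivity of $F$ on the special $\mathrm{Hom}$-sets $\sA(\un,-)$ (resp. $\sA(-,\un)$). So the plan is to establish the two converse implications, in each case by reducing an arbitrary parallel pair $f,g\colon X\to Y$ in $\sA$ to a parallel pair out of, respectively into, $\un$, in a manner compatible with $F$.

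The only tool is the rigidity of $\sA$. Fixing for each object a dual $X^\vee$ with evaluation $\operatorname{ev}_X\colon X^\vee\otimes X\to\un$ and coevaluation $\operatorname{coev}_X\colon\un\to X\otimes X^\vee$, the dual-object adjunction furnishes a bijection
\[
\sA(X,Y)\iso\sA(\un,\,Y\otimes X^\vee),\qquad f\mapsto (f\otimes\operatorname{id}_{X^\vee})\circ\operatorname{coev}_X,
\]
natural in $X$ and $Y$. Assuming $F$ separates morphisms with domain $\un$, take $f,g\colon X\to Y$ with $F(f)=F(g)$ and let $\tilde f,\tilde g\colon\un\to Y\otimes X^\vee$ be their names. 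By injectivity on $\sA(\un,-)$ it is enough to show $F(\tilde f)=F(\tilde g)$, for then $\tilde f=\tilde g$ and hence $f=g$.

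To prove $F(\tilde f)=F(\tilde g)$ I would invoke the standard fact that a strong monoidal functor preserves duality: there are canonical isomorphisms $F(X^\vee)\iso F(X)^\vee$ under which $F(\operatorname{ev}_X)$ and $F(\operatorname{coev}_X)$ become the evaluation and coevaluation of $F(X)$ in $\sB$. Consequently $F$ sends the name of $f$ to the name of $F(f)$, so $F(\tilde f)$ and $F(\tilde g)$ are the names of $F(f)=F(g)$ and therefore agree. This proves faithfulness. The implication from ``separates morphisms with codomain $\un$'' is strictly parallel, replacing the above by the dual adjunction $\sA(X,Y)\iso\sA(X\otimes Y^\vee,\un)$.

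The one point demanding care is the compatibility asserted in the previous paragraph: one must verify that the unit and tensor constraints of the strong monoidal functor $F$ genuinely intertwine the name construction in $\sA$ with that in $\sB$, i.e. that $F$ respects the chosen duality data up to the coherence isomorphisms. This is routine diagram-chasing with the associativity and unit constraints, and I expect it to be the main bookkeeping obstacle; the logical structure of the argument itself is otherwise immediate.
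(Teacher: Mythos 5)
Your proposal is correct and coincides with the paper's own argument: the paper likewise reduces a parallel pair $f,g$ to their names $\tilde f,\tilde g\colon\un\to M^\vee\otimes N$ and uses the identity $F(\tilde f)=\widetilde{F(f)}$, which is precisely the compatibility of $F$ with duality that you flag as the bookkeeping point. Nothing further is needed.
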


\begin{proof} Let $f,g:M\rightrightarrows N$ be two parallel morphisms in $\sA$, and $\tilde f,\tilde g:\un\rightrightarrows M^\vee\otimes N$ be the morphisms corresponding to $f$ and $g$ by adjunction, where $M^\vee$ is the dual of $M$. Suppose that $F(f)=F(g)$. Then $F(\tilde f) = \widetilde{F(f)}=\widetilde{F(g)}=F(\tilde g)$, hence $\tilde f=\tilde g$ and $f=g$. Dual reasoning for ``codomain''.
\end{proof}

\begin{prop}\label{p1} In Lemma \ref{l5}, let $\sA,\sB$ and $F$ be additive (the tensor products are biadditive). Suppose that
\begin{thlist}
\item any nonzero morphism of $\sA$ with domain $\un$ is a monomorphism;
\item $F$ preserves monomorphisms.
\end{thlist}
Then $F$ is faithful.\\
Same conclusion when replacing ``monomorphism'' by ``epimorphism''.
\end{prop}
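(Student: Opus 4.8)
The plan is to reduce the statement to Lemma \ref{l5} by using additivity to trade ``separates morphisms'' for ``reflects zero morphisms''. Since $F$ is additive, for each object $X$ the induced map $\uHom$-style map $\mathrm{Hom}_{\sA}(\un,X)\to\mathrm{Hom}_{\sB}(F\un,FX)$ is a homomorphism of abelian groups, so it is injective if and only if its kernel is trivial, i.e.\ if and only if every $h\colon\un\to X$ with $F(h)=0$ already satisfies $h=0$. Thus, by the equivalence ``$F$ faithful $\iff$ $F$ separates morphisms with domain $\un$'' of Lemma \ref{l5}, it suffices to prove the following: if $h\colon\un\to X$ is nonzero, then $F(h)\neq 0$.

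So let $h\colon\un\to X$ be a nonzero morphism. By hypothesis (i), $h$ is a monomorphism, and by hypothesis (ii) its image $F(h)\colon F\un\to FX$ is again a monomorphism. The key elementary observation is then that, \emph{in an additive category, the zero morphism $0\colon A\to B$ can be a monomorphism only when $A=0$}: indeed, $\mathrm{id}_A$ and $0$ both compose with it to give $0$, so monomorphicity forces $\mathrm{id}_A=0$. Since $F$ is strong monoidal we have $F\un\cong\un_{\sB}$, and as long as $\sB$ is nonzero this is a nonzero object. Hence the monomorphism $F(h)$ has nonzero domain and therefore cannot be the zero morphism, which is exactly what we needed.

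For the variant with epimorphisms one argues dually, now invoking the third characterisation of Lemma \ref{l5} (``$F$ faithful $\iff$ $F$ separates morphisms with codomain $\un$''): replacing ``domain'' by ``codomain'' throughout, it suffices to show that every nonzero $h\colon X\to\un$ has $F(h)\neq 0$. Hypothesis (i) now makes such an $h$ an epimorphism, (ii) makes $F(h)\colon FX\to F\un$ an epimorphism, and the dual observation (the zero morphism $0\colon A\to B$ is an epimorphism only when $B=0$) together with $F\un\cong\un_{\sB}\neq 0$ forces $F(h)\neq 0$.

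The only genuine subtlety — the step I expect to be the real obstacle rather than a routine transcription — is the nonvanishing of $F\un$. This is precisely where the standing hypothesis that $\sB$ is nonzero enters (as in the parenthetical ``(nonzero)'' of Deligne's statement): because the tensor product is biadditive, $\un_{\sB}$ being a zero object would give $Y\cong Y\otimes\un_{\sB}\cong 0$ for every $Y$ and collapse $\sB$ entirely. Granting $\sB\neq 0$, the two observations about zero monomorphisms and zero epimorphisms are immediate, and the rest is the purely formal passage through Lemma \ref{l5} and additivity described above.
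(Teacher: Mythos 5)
Correct, and essentially identical to the paper's own proof, which consists of the single line ``the condition of Lemma \ref{l5} is verified'': your write-up merely supplies the details (additivity turns separation of morphisms with domain $\un$ into reflecting zero morphisms, hypotheses (i)--(ii) make $F(h)$ a monomorphism, and a zero monomorphism forces its domain to vanish), together with the correct dualisation for the epimorphism variant. Your explicit appeal to $\sB\neq 0$ to get $F\un\cong\un_\sB\neq 0$ is a point the paper leaves tacit---the hypothesis is genuinely needed, since for $\sA\neq 0$ the zero functor to the zero category satisfies (i) and (ii) vacuously yet is not faithful, and it surfaces explicitly only later in Corollary \ref{c2}---so you were right to identify that nonvanishing as the one non-formal step.
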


Indeed, the condition of Lemma \ref{l5} is verified.\qed

\begin{cor}\label{c2} If $\sA,\sB$ are abelian, $\sB\ne 0$, $\End_\sA(\un)$ is a field and $F$ preserves monomorphisms (equivalently, epimorphisms), it is faithful.
\end{cor}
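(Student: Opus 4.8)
The plan is to deduce the corollary from Proposition \ref{p1}. Since $\sA$ and $\sB$ are abelian they are additive with biadditive tensor products, and hypothesis (ii) holds by assumption, so everything reduces to verifying hypothesis (i): that every nonzero morphism $f\colon\un\to M$ is a monomorphism. First I would observe that (i) is equivalent to the unit $\un$ being a \emph{simple} object. Indeed, if $\un$ is simple then the kernel of a nonzero $f\colon\un\to M$ is a proper subobject of $\un$, hence $0$; conversely, a proper nonzero subobject $K\subsetneq\un$ would make the quotient $\un\to\un/K$ a nonzero morphism that is not monic. Thus the entire content of the corollary is the classical fact that, in a rigid abelian $\otimes$-category with $\End_\sA(\un)=k$ a field, $\un$ is simple.

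To establish simplicity I would use rigidity essentially. Since $\sA$ is rigid, $X\otimes-$ has the two-sided adjoint $X^\vee\otimes-$ and is hence exact. Let $0\to U\by{u}\un\by{p}V\to0$ be short exact with $U\neq0$; I must show $V=0$. Tensoring by $U$ and using exactness, the composite $U\otimes\un\by{1_U\otimes p}U\otimes V\by{u\otimes1_V}\un\otimes V$ equals $u\otimes p$ by bifunctoriality, that is $p\circ u=0$ under the unit isomorphisms; as its first factor is epic and its second monic, $U\otimes V=0$. The same sequence tensored by $U$ then yields $U\otimes U\iso U$ (and symmetrically $V\otimes U=0$, $V\otimes V\iso V$). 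Converting these $\otimes$-orthogonality relations into $V=0$ is the genuinely delicate point and the main obstacle: it is exactly where the field hypothesis on $\End_\sA(\un)=k$ is indispensable---through the evaluation $U^\vee\otimes U\to\un$, which is nonzero as soon as $U\neq0$ (else a triangle identity forces $1_U=0$), one excludes a proper nonzero $\un$. This is the heart of Deligne's theorem, and I would invoke \cite[Prop.~1.19]{dm} for it. Granting that $\un$ is simple, (i) holds and Proposition \ref{p1} gives that $F$ is faithful.

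It remains to explain the parenthetical ``equivalently, epimorphisms''. Since $\sA$ is rigid, a morphism is monic iff its dual is epic, and $F$, being strong monoidal, commutes with duals; hence $F$ preserves monomorphisms iff it preserves epimorphisms. Moreover simplicity of $\un$ also makes every nonzero morphism with codomain $\un$ epic, so the epimorphism form of Proposition \ref{p1} applies equally and gives the same conclusion. Finally, the hypothesis $\sB\neq0$ guarantees $F(\un)=\un_\sB\neq0$; without it $F$ could be the zero functor and the statement would fail.
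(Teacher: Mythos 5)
Your proof is correct and is essentially the paper's own: the paper likewise deduces the corollary from Proposition \ref{p1} by citing the simplicity of $\un$ --- the correct reference is \cite[Prop.~1.17]{dm}, not Prop.~1.19, which is Deligne's exactness-implies-faithfulness statement --- and it handles ``equivalently, epimorphisms'' exactly as in your last paragraph, via the epimorphism form of Proposition \ref{p1} together with the fact that simplicity of $\un$ makes every nonzero morphism with codomain $\un$ epic. One caution about your duality aside: the claim that $F$ preserves monomorphisms if and only if it preserves epimorphisms is not justified at this level of generality, since $\sB$ is not assumed rigid and, for a morphism of $\sB$ between dualisable objects, ``dual is monic'' does not imply ``epic'' when the tensor product of $\sB$ fails to be exact (in $\mathrm{Coh}(\mathbf{P}^1_{\Q})$ a nonzero section $\mathcal{O}\to\mathcal{O}(1)$ has monic dual $\mathcal{O}(-1)\to\mathcal{O}$ but is not epic); fortunately your argument never uses this equivalence, because the epi version of Proposition \ref{p1} applies directly.
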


Indeed, $\un$ is simple \cite[Prop. 1.17]{dm}, hence (i) holds in Proposition \ref{p1}.\qed

(See Lemma \ref{c1} d) and Proposition \ref{l2} c), e) for partial converses.)

\section{On Lemma 10.7 of \cite{os}}


All categories are $\Q$-linear, pseudo-abelian symmetric monoidal categories, and all functors are $\Q$-linear $\otimes$-functors (= strong symmetric monoidal functors). We refer to \cite{dtens} for Schur funcctors.

\subsection{Property $S$}

\begin{defn}\label{d1} We say that a category $\sA$ has \emph{property $S$} if the following condition holds:
\begin{quote}
A morphism $f:M\to N$ is mono if and only if ($S(f) = 0$ $\Rightarrow$ $S(M) = 0$) for any Schur functor $S$.
\end{quote}
\end{defn}

\begin{rk}\label{r1} $\sA^{op}$ has property $S$ if and only if the following condition holds:
\begin{quote}
A morphism $f:M\to N$ is epi if and only if ($S(f) = 0$ $\Rightarrow$ $S(N) = 0$) for any Schur functor $S$.
\end{quote}
We call this \emph{property $S^{op}$}.
\end{rk}

\begin{lemma}\label{c1} Let $F:\sA\to \sB$ be faithful. \\
a) If $\sB$ verifies the ``if'' part of property $S$, so does $\sA$.\\
b) If $F$ preserves monomorphisms and $\sB$  verifies the ``only if'' part of property $S$, so does $\sA$.\\
c) If $F$ preserves monomorphisms and $\sB$  has property $S$, so does $\sA$.\\
d) If $\sA$  verifies the ``only if'' part of property $S$ and $\sB$ verifies the ``if'' part of property $S$, then $\sA$ has property $S$ and $F$ preserves monomorphisms. \\
e) If $\sA$  verifies the ``if'' part of property $S$ and $\sB$ verifies the ``only if'' part of property $S$, then $F$ reflects monomorphisms.
\end{lemma}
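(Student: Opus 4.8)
The plan is to reduce all five parts to a single compatibility between $F$ and Schur functors. Since $F$ is a $\Q$-linear $\otimes$-functor, for each Schur functor $S$ there is a natural isomorphism $F\circ S\iso S\circ F$, so for every morphism $f\colon M\to N$ of $\sA$ the map $F(S(f))$ is conjugate to $S(F(f))$ by isomorphisms, whence $F(S(f))=0\iff S(F(f))=0$. Combining this with faithfulness of $F$ (which gives $S(f)=0\iff F(S(f))=0$) yields
\[
S(f)=0\iff S(F(f))=0.
\]
A parallel argument on objects, using that a faithful additive functor both preserves and reflects zero objects, gives
\[
S(M)=0\iff S(F(M))=0.
\]
I will call these two equivalences $(\star)$; they say that the input data of property $S$ for $f$ in $\sA$ and for $F(f)$ in $\sB$ coincide verbatim. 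I would establish $(\star)$ once at the outset and then read off each of a)--e) mechanically.

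For a), suppose $f\colon M\to N$ satisfies the hypothesis of the ``if'' part in $\sA$, i.e.\ $S(f)=0\Rightarrow S(M)=0$ for all $S$. By $(\star)$ this is exactly $S(F(f))=0\Rightarrow S(F(M))=0$ for all $S$, so the ``if'' part for $\sB$ makes $F(f)$ a monomorphism; since a faithful functor reflects monomorphisms, $f$ is mono. For b), if $f$ is mono then $F$ preserves it, so $F(f)$ is mono; the ``only if'' part for $\sB$ gives $S(F(f))=0\Rightarrow S(F(M))=0$, and $(\star)$ transports this back to $S(f)=0\Rightarrow S(M)=0$, which is the ``only if'' part for $\sA$. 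Part c) is then immediate: property $S$ is the conjunction of its two halves, so a) and b) together hand it to $\sA$.

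Parts d) and e) use $(\star)$ in the opposite reading. For d), the ``if'' half for $\sA$ follows from a), so $\sA$ has property $S$; to see that $F$ preserves monomorphisms, take $f$ mono in $\sA$, apply the ``only if'' half of $\sA$ to get $S(f)=0\Rightarrow S(M)=0$, translate by $(\star)$ to $S(F(f))=0\Rightarrow S(F(M))=0$, and conclude by the ``if'' half of $\sB$ that $F(f)$ is mono. For e), suppose $F(f)$ is mono; the ``only if'' half of $\sB$ gives $S(F(f))=0\Rightarrow S(F(M))=0$, which $(\star)$ turns into $S(f)=0\Rightarrow S(M)=0$, and the ``if'' half of $\sA$ then forces $f$ mono.

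The only genuine point requiring care is the equivalence $(\star)$ itself: one must invoke the compatibility of a strong symmetric monoidal functor with Schur functors (these being cut out of tensor powers by Young idempotents in $\Q[\fS_n]$, which $F$ respects because it is $\Q$-linear and monoidal), and one must check that faithfulness propagates both to the induced maps on these summands and to the recognition of zero objects. Once $(\star)$ is in hand, every implication above is a formal substitution, and the distribution of the hypotheses ``$F$ preserves monos'', ``$F$ reflects monos'', and ``$\sA$ or $\sB$ verifies a given half'' across a)--e) is dictated purely by which direction of each implication one needs to feed into the matching half of property $S$.
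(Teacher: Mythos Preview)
Your proof is correct and is essentially the same argument as the paper's: both rest on the compatibility $F\circ S\cong S\circ F$, on faithfulness to transfer vanishing of $S(f)$ and $S(M)$ between $\sA$ and $\sB$, and on the fact that a faithful functor reflects monomorphisms. The only difference is organizational---you isolate the equivalence $(\star)$ once and then read off a)--e) uniformly, whereas the paper argues each part separately (often via contrapositives); the mathematical content is identical.
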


\begin{proof} This is a simple matter of logic, but we give details to avoid any confusion.

a) Let $f:M\to N$. Suppose that $F(f)$ is not mono. By hypothesis, there is a Schur functor  $S$ such that $S(F(f))=0$ but $S(F(M))\ne 0$. Then $S(M)\ne 0$ but $S(f)=0$ by the faithfulness of $F$. If now $f$ is not mono, then $F(f)$ is not mono again by the faithfulness of $F$, so the above applies. 

b) Let $f:M\to N$ which is mono, so that  $F(f)$ is mono. Let $S$ be a Schur functor such that $S(f)=0$. Then  $S(F(f))=0$, hence $S(F(M))=F(S(M))=0$ by hypothesis, and $S(M)=0$ by the faithfulness of $F$.

c) follows immediately from a) and b).

d) The first claim follows from a). For the second one, let $f:M\to N$ which is mono. If $F(f)$ is not mono, then by the proof of a) there is a Schur functor  $S$ such that $S(f)=0$ but $S(M)\ne 0$. This contradicts the ``only if'' part of property $S$.

e) Suppose that $F(f)$ is mono. Let $S$ be such that $S(f)=0$. Then $S(F(f))=F(S(f))=0$, hence $F(S(M))=S(F(M))=0$ by the ``only if'' part of property $S$. Thus $S(M)=0$ by the faithfulness of $F$. This shows that $f$ is mono by the ``if'' part of property $S$.
\end{proof}

\begin{prop}\label{p0} For any $\sA$,\\
a) Kernels and cokernels are stable under tensor product by dualisable objects.\\
b) Monomorphisms with dualisable domain and epimorphisms with dualisable codomain are stable under tensor products, tensor powers and the application of Schur functors.\\
c) If $\sA$ is rigid, it verifies the ``only if'' part of properties $S$ and $S^{op}$ and its $\otimes$-structure is exact.
\end{prop}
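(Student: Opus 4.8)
The plan is to treat a) by pure adjunction. If $P$ is dualisable with dual $P^\vee$, then $P^\vee$ is dualisable with dual $P$, so $-\otimes P$ is left adjoint to $-\otimes P^\vee$ (unit and counit $\mathrm{coev}_P,\mathrm{ev}_P$) and, $P^\vee$ being dualisable, also right adjoint to $-\otimes P^\vee$; in other words $-\otimes P\dashv-\otimes P^\vee\dashv-\otimes P$. A left adjoint preserves cokernels and a right adjoint preserves kernels, so $-\otimes P$ preserves both whenever they exist, and by the symmetry the same holds for $P\otimes-$. The only thing to verify is the biadditivity bookkeeping, which is routine.

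For b) I would first record, as a consequence of the two adjunctions in a) together with the additive ``cancel against $0$'' description of mono/epi, that for dualisable $P$ the functor $-\otimes P$ preserves monomorphisms and epimorphisms. I would then reduce the three operations to \emph{tensor powers}. By $\Q$-linearity and pseudo-abelianity a Young symmetrizer splits, so $S_\lambda(f)$ is the restriction of $f^{\otimes n}$ to the cut-out summands, i.e.\ a direct summand (as a morphism) of $f^{\otimes n}$; likewise $f\otimes g$ is one block of $(f\oplus g)^{\otimes 2}$. Since a direct summand of a monomorphism is a monomorphism (if $h_1\oplus h_2$ is mono and $h_1u=0$ then $h(u,0)=0$, whence $u=0$), and dually for epimorphisms, everything follows once tensor powers are settled; note that $f\oplus g$ is again a mono with dualisable domain $M\oplus M'$, and $M^{\otimes n}$, $S_\lambda(M)$ are dualisable, so the class is preserved on the nose.

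The crux is thus the statement: \emph{if $f\colon M\to N$ is mono with $M$ dualisable, then $f^{\otimes n}$ is mono}. Here the naive idea of writing $f^{\otimes n}$ as a composite of maps $\mathrm{id}_{N^{\otimes i}}\otimes f\otimes\mathrm{id}_{M^{\otimes j}}$ breaks down, because after the first step one is tensoring $f$ by the codomain $N$, which is \emph{not} dualisable, and tensoring a mono by a non-dualisable object need not preserve it (already $\Q$-linearly: $\times t$ on $\Q[t]$ becomes $0$ after $\otimes\,\Q[t]/t$). The way around this is to pass to the mate: since $M$ is dualisable, $f$ corresponds to $\tilde f\colon\un\to N\otimes M^\vee$, and the zig-zag identities show $f$ is mono iff $\tilde f$ is mono and $\widetilde{f^{\otimes n}}=\tilde f^{\otimes n}$ up to the symmetry isomorphism. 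This reduces the claim to a monomorphism \emph{out of the unit}: if $a\colon\un\to T$ is mono then $a^{\otimes n}$ is mono. This final point, which genuinely uses the $\Q$-linear symmetric structure rather than any factorisation, is the main obstacle; I expect to obtain it by viewing $a$ as a non-zero-divisor and $a^{\otimes n}=a^{n}$ as its image in the symmetric part $\mathrm{Sym}^nT\subseteq T^{\otimes n}$.

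For c) assume $\sA$ rigid. Then every object is dualisable, so a) at once gives that $X\otimes-$ preserves kernels and cokernels for all $X$, i.e.\ the $\otimes$-structure is exact. Moreover the hard point of b) evaporates in the rigid case: as all objects are dualisable, $-\otimes X$ preserves monos for every $X$, so the factorisation of $f^{\otimes n}$ is now a composite of monos, whence $f^{\otimes n}$—and therefore each summand $S_\lambda(f)$—is mono, and dually for epis. To deduce the ``only if'' part of property $S$, let $f\colon M\to N$ be mono and $S$ a Schur functor with $S(f)=0$; since $S(f)$ is mono and a zero morphism is mono exactly when its domain vanishes, we get $S(M)=0$. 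The same argument applied to epimorphisms (equivalently, to the again-rigid $\sA^{op}$) gives the ``only if'' part of $S^{op}$.
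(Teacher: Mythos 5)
Your part a) is correct and coincides with the paper's proof: the paper observes, slightly more generally, that $-\otimes P$ preserves cokernels whenever the internal Homs $\uHom(P,-)$ are representable (it is then a left adjoint) and kernels whenever coHoms are (a right adjoint); for dualisable $P$ both adjoints are $-\otimes P^\vee$, which is exactly your double adjunction $-\otimes P\dashv -\otimes P^\vee\dashv -\otimes P$. Your part c) is also correct and follows the paper's route (``c) follows trivially from a) and b)''): with every object dualisable, each factor $1\otimes f\otimes 1$ in the factorisation of $f^{\otimes n}$ is mono by a), $S_\lambda(f)$ is a direct summand of $f^{\otimes n}$ by pseudo-abelianity, and a zero monomorphism in an additive category has zero domain; exactness of $\otimes$ is a) again. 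Your reduction of b) to tensor powers, via $f\oplus g$ and split Young symmetrizers, is likewise sound.

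The genuine gap is the mate step in b). The claimed equivalence ``$f$ is mono iff $\tilde f\colon\un\to N\otimes M^\vee$ is mono'' is false in both directions, already in $\Vec_\Q$: the sum map $h\colon\Q^2\to\Q$ is not mono, yet its mate $\un\to\Q\otimes(\Q^2)^\vee\cong\Q^2$, $1\mapsto(1,1)$, is; conversely $f\colon 0\to N$ is mono while its mate $\un\to 0$ is not (for a less degenerate example, over $R=\Q\times\Q$ the identity of the dualisable module $M=\Q\times 0$ has mate $R\to M\otimes M^\vee\cong M$, $1\mapsto e_1$, which is not mono). What the zig-zag identities actually give is $\tilde f u=0\iff f(u\otimes 1_M)=0$ for $u\colon T\to\un$: mono-ness of $\tilde f$ tests cancellation of morphisms of the form $u\otimes 1_M$, not of morphisms into $M$, and neither condition implies the other. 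So the reduction to monomorphisms out of $\un$ collapses; and even granting it, your terminal claim --- $a\colon\un\to T$ mono $\Rightarrow$ $a^{\otimes n}$ mono --- is explicitly left as a hope (``I expect to obtain it\dots''), although it is where the entire difficulty lives: passing to the symmetric part changes nothing, since it is a direct summand of $T^{\otimes n}$, and ``a power of a regular element is regular'' is, at this level of generality, essentially the statement to be proved. For comparison: the paper proves b) by precisely the factorisation-plus-symmetry argument you discard (``monomorphisms and epimorphisms are stable under composition''); your observation that in the non-rigid case one factor tensors a mono with a possibly non-dualisable codomain is a fair criticism of how terse that one-line proof is, but your proposed repair does not work, whereas in the rigid case --- the only case the paper uses downstream, through c) --- the factorisation argument, including your own version of it, is complete.
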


\begin{proof} a) More generally, cokernels are stable under tensor product with an object $P$ such that the internal Hom $\uHom(P,Q)$ is representable for any $Q$, because this tensor product is then right exact as a left adjoint. Similarly for kernels, assuming that internal coHom's (Moh objects in Saavedra's terminology) are representable.


b) follows from a) by using the symmetry of the tensor product and the fact that monomorphisms and epimorphisms are stable under composition (the same generalisation as in a) applies); c) follows trivially from a) and b) (closed and coclosed would be sufficient).
\end{proof}


\begin{prop}\label{p3} Suppose $\sA$ abelian semi-simple, Schur-finite (any object is killed by some Schur functor) and such that $M\otimes N\ne 0$ if $M\ne 0$ and $N\ne 0$ (see Remark \ref{r3} below). Then $\sA$ has properties $S$ and $S^{op}$.\qed
\end{prop}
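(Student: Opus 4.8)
The plan is to use semisimplicity to split $f$ and thereby reduce property $S$ to a statement about Schur functors applied to a single direct summand. Given $f\colon M\to N$, semisimplicity lets me write $M\cong K\oplus M'$ with $K=\Ker f$ and $M'\cong\operatorname{im} f$, so that $f=\iota\circ p$, where $p\colon M\to M'$ is the projection (a split epimorphism, with section the inclusion $j\colon M'\to M$) and $\iota\colon M'\to N$ is a split monomorphism. For any Schur functor $S$, functoriality turns $S(\iota)$ into a split mono and $S(p)$ into a split epi onto $S(M')$; chasing these splittings gives the key reduction $S(f)=0\iff S(M')=0$. The ``only if'' part of property $S$ is then immediate: if $f$ is mono then $K=0$, hence $M'\cong M$, and $S(f)=0\iff S(M')=0\iff S(M)=0$, so in particular $S(f)=0\Rightarrow S(M)=0$ for every $S$.

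For the ``if'' part I would argue by contraposition: assuming $f$ is not mono, i.e. $K\ne 0$, I must produce a Schur functor $S$ with $S(f)=0$ (equivalently $S(M')=0$) but $S(M)\ne 0$. By Schur-finiteness the set $\{\lambda : S_\lambda(M')=0\}$ is nonempty, and since the containment order on partitions is well-founded (as $|\lambda|$ strictly decreases along $\subsetneq$) it has a minimal element $\lambda$; moreover $\lambda\ne\emptyset$ because $S_\emptyset(M')=\un\ne 0$. I would then take $\nu$ to be $\lambda$ with one corner box removed, so that $\nu\subsetneq\lambda$ and hence $S_\nu(M')\ne 0$ by the minimality of $\lambda$.

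The nonvanishing of $S_\lambda(M)$ comes from the Cauchy decomposition of a Schur functor on a direct sum (as in \cite{dtens}):
\[
S_\lambda(K\oplus M')\;=\;\bigoplus_{\mu,\sigma}\bigl(S_\mu(K)\otimes S_\sigma(M')\bigr)^{\oplus c^\lambda_{\mu\sigma}},
\]
where $c^\lambda_{\mu\sigma}$ are the Littlewood--Richardson coefficients. Taking $\mu=(1)$ and $\sigma=\nu$, the Pieri rule gives $c^\lambda_{(1)\nu}=1$ since $\lambda/\nu$ is a single box, so $K\otimes S_\nu(M')=S_{(1)}(K)\otimes S_\nu(M')$ is a direct summand of $S_\lambda(M)$. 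As $K\ne 0$ and $S_\nu(M')\ne 0$, the hypothesis that $\otimes$ has no zero divisors forces $K\otimes S_\nu(M')\ne 0$, whence $S_\lambda(M)\ne 0$ by semisimplicity. This $\lambda=S_\lambda$ is the desired witness, which completes property $S$. Property $S^{op}$ then follows by applying the result to $\sA^{op}$, which is again abelian semisimple, Schur-finite and has no tensor zero divisors, so that its property $S$ is precisely property $S^{op}$ for $\sA$ by Remark \ref{r1} (equivalently, one repeats the argument, now splitting $N\cong\operatorname{im} f\oplus\Coker f$).

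The main obstacle is not a single hard step but rather locating the place where the no-zero-divisor hypothesis is genuinely needed: without it the summand $K\otimes S_\nu(M')$ could collapse even though both factors are nonzero, and the witness would fail. The other point requiring care is the selection of $\lambda$; I would rely only on its minimality (so that deleting a corner box keeps $S_\nu(M')\ne 0$) rather than on any monotonicity of $\lambda\mapsto S_\lambda(M')$, which keeps the argument free of the propagation lemma $S_\lambda(X)=0,\ \lambda\subseteq\mu\Rightarrow S_\mu(X)=0$.
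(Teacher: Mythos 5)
Your proof is correct and follows essentially the same route as the paper's: decompose $f$ by semisimplicity into an isomorphism onto its image plus zero on the kernel, take an inclusion-minimal $\lambda$ with $S_\lambda(\mathrm{im}\, f)=0$ (so that $S_\lambda(f)=0$ by functoriality, matching the paper's idempotent argument), and exhibit $K\otimes S_\nu(\mathrm{im}\, f)$ as a Pieri summand of $S_\lambda(M)$, nonzero by the no-zero-divisor hypothesis. The only differences are cosmetic: you obtain the minimal $\lambda$ from well-foundedness alone where the paper also invokes the sieve property of \cite[Cor.~1.7]{dtens}, and you spell out the passage to $S^{op}$ via $\sA^{op}$, which the paper leaves implicit.
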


\begin{proof} We only deal with property $S$. Let $f:M\to N$ be a morphism in $\sA$.

\emph{Only if}: by semi-simplicity, $f$ mono has a retraction $g$, thus $S(f)$ has the retraction $S(g)$ for all $S$.

\emph{If}: still by semi-simplicity, write $f$ as a direct sum of an isomorphism $f':M'\iso N'$ and a $0$ morphism $f'':M''\to N''$. By \cite[Cor. 1.7]{dtens}, the set of partitions $\lambda$ such that $S_\lambda(M')=0$ is a sieve for the opposite order to the inclusion of Young diagrams $[\lambda]$; since $M'$ is Schur-finite, we may therefore find $\lambda$ such that $S_\lambda(M')=0$ but $S_\mu(M')\ne 0$ for any $\mu$ such that $[\mu]\subsetneq [\lambda]$. By \cite[(1.5.1) et (1.8.1)]{dtens}, $S_\lambda(M)$ contains $S_\mu(M')\otimes M''$ as a direct summand for such $\mu$ with $|\mu| = |\lambda|-1$. If $f$ is not mono, i.e. if $M''\ne 0$, this summand is nonzero by hypothesis, so $S_\lambda(M)\ne 0$. 
Let $n=|\lambda|$. Then $f^{\otimes n}$ is the direct sum of the isomorphism ${f'}^{\otimes n}$ and a $0$ morphism; applying the idempotent defining $S_\lambda$ to this decomposition, we get $S_\lambda(f) = 0$.
\end{proof}

\begin{cor}\label{l1} The $\otimes$-category $\Vec_K^\pm$ of finite-dimensional super-vector spaces over a field $K$ of characteristic $0$ has properties $S$ and $S^{op}$.\qed
\end{cor}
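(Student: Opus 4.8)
The plan is to obtain this as an immediate application of Proposition \ref{p3}: it suffices to check that $\Vec_K^\pm$ satisfies the three hypotheses there, namely that it is abelian semi-simple, Schur-finite, and that $M\otimes N\ne 0$ whenever $M\ne 0$ and $N\ne 0$. Two of these are formal; the third carries the real content.

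First I would dispatch semi-simplicity and the tensor non-vanishing, which are routine dimension bookkeeping. A finite-dimensional object decomposes as $V=V_0\oplus V_1$, so $\Vec_K^\pm$ is abelian (kernels, cokernels and biproducts are computed degreewise) and every object is a finite direct sum of copies of the two simple objects $\un$ and its odd twist $\un^-$; hence it is semi-simple. For the non-vanishing, writing $\dim M=(p\mid q)$ and $\dim N=(r\mid s)$ for the even/odd dimensions, the tensor product $M\otimes N$ has total dimension $(p+q)(r+s)$, which is strictly positive as soon as both factors are nonzero. Thus $M\otimes N\ne 0$.

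The substantive step, and the one I expect to require a genuine external input rather than a formal manipulation, is Schur-finiteness. Here I would invoke the classical description of the vanishing of Schur functors on super-vector spaces: for $V$ of dimension $(p\mid q)$ one has $S_\lambda(V)=0$ precisely when the Young diagram $[\lambda]$ contains the box in row $p+1$ and column $q+1$, equivalently $\lambda_{p+1}>q$. Since $p$ and $q$ are finite, the rectangular partition $\lambda=\bigl((q+1)^{p+1}\bigr)$ satisfies $\lambda_{p+1}=q+1>q$ and therefore annihilates $V$. Hence every object of $\Vec_K^\pm$ is killed by some Schur functor. With all three hypotheses of Proposition \ref{p3} verified, it yields properties $S$ and $S^{op}$ for $\Vec_K^\pm$ simultaneously, which is the assertion.
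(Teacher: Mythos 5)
Your proof is correct and takes essentially the same route as the paper, which states the corollary with a \qed as an immediate consequence of Proposition \ref{p3}; your verification of the three hypotheses (semi-simplicity, tensor non-vanishing by dimension count, and Schur-finiteness via the rectangle criterion of \cite[Cor.\ 1.9]{dtens}) simply fills in the routine details the paper leaves implicit. One harmless remark: your row/column convention for the annihilating rectangle is transposed relative to the paper's quotation of that criterion in the proof of Lemma \ref{l4}, but either way a finite rectangle kills every object, so Schur-finiteness holds.
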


The following lemma is obvious:

\begin{lemma} \label{d3} Let $M\in \sA$. Write $\Sigma(M)$ for the (possibly empty) set of Schur functors $S$ such that  $S(M)=0$. Then $\Sigma(M)\subseteq \Sigma(F(M))$ for any $\otimes$-functor $F:\sA\to \sB$, with equality if $F$ is faithful.\qed
\end{lemma}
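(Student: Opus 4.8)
The plan is to reduce both assertions to a single structural fact, namely that a $\otimes$-functor commutes with every Schur functor up to canonical isomorphism, and then to exploit that a faithful $\Q$-linear functor reflects the zero object.

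First I would recall the construction of the Schur functor $S=S_\lambda$ attached to a partition $\lambda$ of an integer $n$: it is the image of the Young idempotent $c_\lambda\in\Q[\fS_n]$ acting on the tensor power $M^{\otimes n}$, this image existing because $\sA$ is pseudo-abelian. Since $F$ is a strong symmetric monoidal functor, the coherence isomorphisms $F(M^{\otimes n})\iso F(M)^{\otimes n}$ intertwine the two $\fS_n$-actions (this is precisely where preservation of the symmetry is used, not merely of the tensor product). As $F$ is $\Q$-linear it carries $c_\lambda$ to $c_\lambda$, and, a split idempotent being an absolute (co)limit preserved by every functor, $F$ preserves its splitting. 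Hence one obtains a canonical isomorphism $F(S(M))\iso S(F(M))$ for every $M$ and every Schur functor $S$.

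Granting this, the inclusion $\Sigma(M)\subseteq\Sigma(F(M))$ is immediate: if $S(M)=0$ then $S(F(M))\cong F(S(M))=F(0)=0$, since any additive functor sends the zero object to the zero object, so $S\in\Sigma(F(M))$. For the reverse inclusion under the hypothesis that $F$ is faithful, the only extra ingredient is that a faithful $\Q$-linear functor reflects the zero object, i.e. $F(X)=0\Rightarrow X=0$. This holds because $F(X)=0$ gives $F(\mathrm{id}_X)=\mathrm{id}_{F(X)}=0=F(0_X)$, whence $\mathrm{id}_X=0_X$ by faithfulness, forcing $X=0$. Applying this to $X=S(M)$: if $S\in\Sigma(F(M))$, i.e. $S(F(M))=0$, then $F(S(M))\cong S(F(M))=0$, so $S(M)=0$ and $S\in\Sigma(M)$; combined with the first inclusion this yields $\Sigma(M)=\Sigma(F(M))$.

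The step I expect to require the most care is the very first one — the compatibility of $F$ with the $\fS_n$-actions, and hence with the idempotents $c_\lambda$ — since this is exactly what the word ``obvious'' is concealing; everything afterwards is formal. In particular one must check that the symmetric monoidal coherence isomorphisms genuinely equivariantly identify the $\fS_n$-representations $F(M)^{\otimes n}$ and $F(M^{\otimes n})$, and not merely the underlying objects, for the preservation of $c_\lambda$ to be legitimate.
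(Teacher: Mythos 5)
Your proof is correct and matches the paper's intent: the paper declares the lemma obvious, implicitly relying on exactly the compatibility $F(S(M))\cong S(F(M))$ that you establish via the $\fS_n$-equivariance of the monoidal coherence isomorphisms and the absoluteness of split idempotents (which is why pseudo-abelianness suffices). Your observation that a faithful $\Q$-linear functor reflects zero objects is the only other ingredient needed, and you verify it correctly.
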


\subsection{Pre-tannakian categories}

\begin{defn}\label{d2} Let $\sA$ be a $\Q$-linear symmetric monoidal category.\\ 
a) $\sA$ is \emph{proto-tannakian}  if it admits a faithful $\otimes$-functor $\omega:\sA\to \Vec_K^\pm$, where $K$ is a field of characteristic $0$. We call $\omega$ a \emph{fibre functor}.\\
b) $\sA$ is \emph{left} (resp. \emph{right}) \emph{pre-tannakian} if we can choose $\omega$ preserving monomorphisms (resp.  epimorphisms).\\ 
c) $\sA$ is \emph{pre-tannakian} if it is left and right pre-tannakian.
\end{defn}

\begin{exs}\label{ex1} a) By \cite[Lemma 2.1.5 (2)]{andreslope}, a quasi-tannakian category in the sense of loc. cit., Def. 2.1.1 is pre-tannakian.\\
b) If $\sA$ is abelian, rigid and Schur-finite (super Tannakian in the sense of \cite[Def. 10.2]{os}), with $k=\End_\sA(\un)$ a field, it is pre-tannakian. Indeed, we may assume $k$ algebraically closed; by \cite[Prop. 2.1]{dtens}, $\sA$ admits an exact $\otimes$-functor $\omega$ to the category of super-modules over a super commutative $k$-algebra $R$, whose values are (by rigidity)  projective. Thus we may replace $R$ with a field $K$ by specialisation. We then get the faithfulness of $\omega$ from Corollary \ref{c2}.\\
c) By \cite[Th. 10.10]{os}, b) extends to the case where $\sA$ is only integral in the sense of loc. cit., beg. \S3, i.e. $f\otimes g=0$ $\Rightarrow$ $f=0$ or $g=0$ (pseudo-Tannakian in the terminology of loc. cit.). (Conversely, any proto-tannakian category is clearly integral and Schur-finite.)
\end{exs}

\begin{rk}[cf. proofs of Lemma \ref{l5} and Cor. \ref{c2}] \label{r3} If $\sA$ is abelian and rigid and $\End_\sA(\un)$ is a field, it is integral. Indeed, let $f:A\to A'$ and $g:B\to B'$ be two nonzero morphisms. Their adjoints $\tilde f:\un\to A^\vee\otimes A'$, $\tilde g:\un\to B^\vee\otimes B'$ are nonzero, hence monomorphisms since $\un$ is simple \cite[Prop. 1.17]{dm}. By the exactness of the tensor product (Proposition \ref{p0} c)), $\tilde f\otimes \tilde g$ is a monomorphism, hence nonzero. But it is the adjoint of $f\otimes g$. In particular, the last hypothesis of Propositon \ref{p3} is automatic if $\sA$ is rigid (or contained in a rigid abelian category).

On the other hand, O'Sullivan proves in \cite[Lemma 3.1]{os} a very strong result: $\sA$ is fractionally closed in the sense of loc. cit., Sect. 3, bot. p. 6.
\end{rk}

\begin{prop}\label{l2} a) Any left (resp. right) pre-tannakian category has property $S$ (resp. $S^{op}$).\\
b) If $\sA$ is left pre-tannakian and $\sB$ verifies the ``if'' (resp. the ``only if'') part of property $S$, any faithful functor $F:\sA\to \sB$ preserves (resp. reflects) monomorphisms. Same statement when replacing left by right, monomorphism by epimorphism and $S$ by $S^{op}$.\\
c) If $\sA$ and $\sB$ are pre-tannakian, any faithful functor $F:\sA\to \sB$ (in particular, any fibre functor $\omega:\sA\to \Vec_K^\pm$) preserves and reflects monomorphisms and epimorphisms.\\ 
d) If $\sA$ is pre-tannakian, its tensor product preserves monomorphisms and epimorphisms.\\
e) Any rigid proto-tannakian category is pre-tannakian.
\end{prop}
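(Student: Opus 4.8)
The whole statement is essentially formal: it combines the logical manipulations of Lemma \ref{c1} with three genuine inputs, namely that $\Vec_K^\pm$ has properties $S$ and $S^{op}$ (Corollary \ref{l1}), that a rigid category verifies the ``only if'' parts of $S$ and $S^{op}$ (Proposition \ref{p0} c)), and that $\otimes$ is exact on $\Vec_K^\pm$. The five parts are not independent, so the plan is to establish a), b) and e) first and then deduce c) and d). Throughout I would handle the ``resp.'' clauses and all of property $S^{op}$ by passing to the opposite category, using that property $S^{op}$ of $\sA$ is by definition (Remark \ref{r1}) property $S$ of $\sA^{op}$, that a functor preserves epimorphisms iff its opposite preserves monomorphisms, and that the family of Schur functors is stable under this duality.

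For a), a left pre-tannakian $\sA$ comes with a faithful fibre functor $\omega:\sA\to\Vec_K^\pm$ preserving monomorphisms; since $\Vec_K^\pm$ has property $S$ (Corollary \ref{l1}), Lemma \ref{c1} c) gives property $S$ for $\sA$, and the ``resp.'' (right/$S^{op}$) case follows by the same argument applied to $\omega^{op}:\sA^{op}\to(\Vec_K^\pm)^{op}$. For b), part a) shows that a left pre-tannakian $\sA$ verifies both halves of property $S$; feeding the ``only if'' half of $\sA$ and the ``if'' half of $\sB$ into Lemma \ref{c1} d) yields that $F$ preserves monomorphisms, while feeding the ``if'' half of $\sA$ and the ``only if'' half of $\sB$ into Lemma \ref{c1} e) yields that $F$ reflects them; the right/epimorphism/$S^{op}$ variant is again dual.

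For e), if $\sA$ is rigid proto-tannakian then Proposition \ref{p0} c) gives that $\sA$ verifies the ``only if'' parts of $S$ and $S^{op}$, while $\Vec_K^\pm$ verifies the ``if'' parts by Corollary \ref{l1}; applying Lemma \ref{c1} d) (and its $S^{op}$ analogue) to the given faithful $\omega$ shows that $\omega$ preserves both monomorphisms and epimorphisms, so $\sA$ is pre-tannakian. Part c) then follows from b): a pre-tannakian $\sB$ has properties $S$ and $S^{op}$ by a), hence verifies all four relevant halves, and applying b) and its dual four times to the left-and-right pre-tannakian $\sA$ shows that $F$ preserves and reflects monomorphisms and epimorphisms; the parenthetical case of a fibre functor is covered because $\Vec_K^\pm$ is itself pre-tannakian by e) (being rigid with the identity as fibre functor). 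Finally, for d) I would use that, by c), a fibre functor $\omega:\sA\to\Vec_K^\pm$ both preserves and reflects monomorphisms and epimorphisms: given a monomorphism $f$ and any $P$, monoidality gives $\omega(f\otimes\mathrm{id}_P)=\omega(f)\otimes\mathrm{id}_{\omega(P)}$, which is a monomorphism since $\otimes$ is exact on $\Vec_K^\pm$, so reflection forces $f\otimes\mathrm{id}_P$ to be a monomorphism; the epimorphism case is identical.

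I do not expect a serious obstacle, since the only non-formal inputs are Corollary \ref{l1}, Proposition \ref{p0} c) and the exactness of $\otimes$ on $\Vec_K^\pm$. The one point requiring care is bookkeeping: keeping track of which half (``if'' or ``only if'') of which property ($S$ or $S^{op}$) is invoked at each use of Lemma \ref{c1}, and applying the opposite-category translation of Schur functors, monomorphisms and the two halves of property $S$ consistently, so that the numerous ``resp.'' clauses line up correctly.
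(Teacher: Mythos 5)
Your proposal is correct and follows essentially the same route as the paper: a) via Lemma \ref{c1} c) and Corollary \ref{l1}, b) via a) together with Lemma \ref{c1} d) and e), c) via b), d) via c) plus exactness of $\otimes$ in $\Vec_K^\pm$, and e) via Proposition \ref{p0} c), Corollary \ref{l1} and Lemma \ref{c1} d). The only differences are cosmetic: you spell out the opposite-category bookkeeping for the ``resp.''/$S^{op}$ clauses and the reflection argument in d) ($\omega(f\otimes\mathrm{id}_P)=\omega(f)\otimes\mathrm{id}_{\omega(P)}$ is mono, hence so is $f\otimes\mathrm{id}_P$), which the paper leaves implicit.
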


\begin{proof} a) follows from Lemma \ref{c1} c) and Corollary \ref{l1}. In b), ``preserves'' follows from a) and Lemma \ref{c1} d), while ``reflects'' follows from a)  and Lemma \ref{c1} e). c) follows from b) and Corollary \ref{l1}. d) follows from c) and the exactness of the tensor product in $\Vec^\pm_K$; e) follows from Proposition \ref{p0} c),  Corollary \ref{l1} and Lemma \ref{c1} d).
\end{proof}


\subsection{Super-dimensions}

\begin{defn}\label{d4} Let $\sA$ be proto-tannakian, and let $M\in \sA$. We define the \emph{super-dimension} $\dim^\pm(M)$ of $M$ as the super-dimension of $\omega(M)$ for $\omega$ as in Definition \ref{d2}. 
\end{defn}

\begin{lemma} \label{l4} $\dim^\pm(M)$ does not depend on the choice of $\omega$. It is preserved by any faithful functor (between proto-tannakian categories). Moreover:
\begin{enumerate}
\item $\dim^\pm(M)$ is additive in $M$; $M=0$ $\iff$  $\dim^\pm(M) =0$.
\item $M$ invertible $\Rightarrow$ $\dim^\pm(M) = 1|0$ or $0|1$; the converse is true if $M$ is dualisable.
\item If $\sA$ is left (resp. right) pre-tannakian, then $\dim^\pm(M) \le \dim^\pm(N)$ (resp. $\dim^\pm(M)\ge \dim^\pm(N)$) for any monomorphism (resp. epimorphism) $f:M\to N$.
\item If there is equality in (3), then $f$ is also epi (resp. mono).
\item If $\dim^\pm(M)=m|n$ and $M$ is dualisable, then $tr(1_M)=m-n$.
\end{enumerate}
\end{lemma}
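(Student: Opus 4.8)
The plan is to reduce every assertion to the corresponding elementary statement about finite-dimensional super-vector spaces, transported along a fibre functor $\omega:\sA\to\Vec_K^\pm$. The one genuinely non-formal ingredient, on which independence of $\omega$ rests, is the combinatorial fact that for a super-vector space $V$ the set $\Sigma(V)$ of Schur functors annihilating it determines its super-dimension: in characteristic $0$ one has $S_\lambda(V)\ne 0$ if and only if $[\lambda]$ fits in the $(m|n)$-hook (i.e. $\lambda_{m+1}\le n$), where $m|n=\dim^\pm(V)$, so that $\Sigma(V)=\{\lambda:\lambda\supseteq(n+1)^{m+1}\}$ has least element the $(m+1)\times(n+1)$ rectangle and thereby pins down $(m,n)$. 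Granting this, independence of $\omega$ is immediate: by Lemma \ref{d3} (with equality, $\omega$ being faithful) $\Sigma(M)=\Sigma(\omega(M))$, so $\dim^\pm(\omega(M))$ depends only on $M$. Preservation by a faithful $F:\sA\to\sB$ then follows because, for any fibre functor $\omega_\sB$ of $\sB$, the composite $\omega_\sB F$ is a fibre functor of $\sA$, whence $\dim^\pm_\sA(M)=\dim^\pm(\omega_\sB F(M))=\dim^\pm_\sB(F(M))$ by independence.

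For the numbered items I would argue as follows. Item (1) is inherited from $\Vec_K^\pm$: $\omega$ is additive and super-dimension is additive in direct sums; moreover $\omega$ reflects zero objects, since a faithful additive functor reflects zero morphisms, so $M=0\iff 1_M=0\iff \omega(1_M)=0\iff\omega(M)=0$, and a super-vector space vanishes iff its super-dimension is $0|0$. For (5), a strong symmetric monoidal functor preserves duals and the structural (co)evaluation and symmetry maps, hence preserves the categorical trace; thus $\omega(tr(1_M))=tr(1_{\omega(M)})$, which in $\Vec_K^\pm$ is the super-trace $m-n$. Since $\omega$ is $\Q$-linear and faithful, the induced ring map $\End_\sA(\un)\to K$ is injective and sends $q\cdot 1_\un\mapsto q$; comparing $\omega(tr(1_M))$ with $\omega((m-n)1_\un)$ gives $tr(1_M)=(m-n)1_\un$.

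I would prove (2) using (5). One direction is formal: $\omega$ sends an invertible object to an invertible super-vector space, and these are exactly the lines of super-dimension $1|0$ or $0|1$. For the converse, let $M$ be dualisable with $\dim^\pm(M)=1|0$ (the case $0|1$ is symmetric, up to a sign). By (5), $tr(1_M)=1$, i.e. the composite $\un\xrightarrow{c} M\otimes M^\vee \xrightarrow{e'}\un$ of coevaluation and symmetry-twisted evaluation is $1_\un$. Hence $c$ is a split monomorphism and $p=c\circ e'$ is an idempotent on $M\otimes M^\vee$ with splitting $(\un,c,e')$; splitting $1-p$ in the pseudo-abelian category $\sA$ gives $M\otimes M^\vee\cong \un\oplus P$. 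Now $\dim^\pm(M\otimes M^\vee)=(1|0)\otimes(1|0)=1|0=\dim^\pm(\un)$, so $\dim^\pm(P)=0|0$ and $P=0$ by (1); thus $M\otimes M^\vee\cong\un$ and $M$ is invertible.

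Finally (3) and (4) are transported monotonicity. If $\sA$ is left pre-tannakian, choose $\omega$ preserving monomorphisms; a mono $f:M\to N$ gives an injective even map $\omega(f)$, whence $\dim^\pm(M)\le\dim^\pm(N)$ componentwise, proving (3) (the epimorphism case is dual, with $\omega$ preserving epimorphisms). If equality holds, $\omega(f)$ is an injective map between super-vector spaces of equal super-dimension, hence an isomorphism, in particular an epimorphism; since $\omega$ is faithful and $\omega(f)$ is epi, $f$ itself is epi, giving (4) (again dually on the other side). The main obstacle is the hook criterion of the first paragraph, the only input not reducible to pure category theory; everything else is bookkeeping with fibre functors and the faithfulness of $\omega$.
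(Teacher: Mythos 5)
Your proof is correct, and for the well-definedness, the preservation statement, and items (1), (3), (4), (5) it follows the paper's route exactly: Deligne's rectangle criterion \cite[Cor.~1.9]{dtens} shows that $\Sigma(\omega(M))$ determines the super-dimension, Lemma \ref{d3} (with equality by faithfulness) transports this to $\sA$, preservation follows by composing fibre functors, and the numbered items are transported from $\Vec_K^\pm$ along $\omega$ --- for (4) you use directly that a faithful functor reflects epimorphisms, where the paper invokes Proposition \ref{l2} c); your version is if anything more economical. The one genuine divergence is the converse in (2): the paper observes that the unit $\un\to M^\vee\otimes M$ and counit $M\otimes M^\vee\to\un$ become inverse isomorphisms after applying $\omega$ (since $\omega(M)$ is a super-line) and that faithfulness reflects the relation ``these composites are identities'', yielding $M\otimes M^\vee\simeq\un$ at once; you instead deduce $tr(1_M)=\pm 1$ from (5), split the resulting idempotent on $M\otimes M^\vee$, and kill the complementary summand $P$ via $\dim^\pm(P)=0\Rightarrow P=0$ from (1). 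Both arguments are valid; the paper's is shorter and uses only faithfulness, while yours additionally uses idempotent splitting --- legitimately, since all categories in the paper are assumed pseudo-abelian --- and has the mild virtue of exhibiting $M\otimes M^\vee\simeq\un\oplus P$ explicitly. One cosmetic caveat: your rectangle $(n+1)^{m+1}$ is the transpose of the one in the paper's citation ($n+1$ rows and $m+1$ columns); this rows-versus-columns convention is harmless here, since for independence of $\omega$ you only need that $\Sigma(V)$ determines the pair $(m,n)$ injectively, and you obtain (5), the one place where the asymmetry between $m$ and $n$ matters, by transport of the supertrace rather than from the combinatorics.
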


\begin{proof} By \cite[Cor. 1.9]{dtens}, $\dim^\pm(M)$ is the pair $m|n$ such that $S_\lambda(\omega(M))\allowbreak = 0$ for a partition $\lambda$ if and only if $[\lambda]$ contains the rectangular diagram with $n + 1$ rows and $m + 1$ columns; the two claims then follow from Lemma \ref{d3}. Next, (1), (3) and (5) hold in a category of the form $\Vec_K^\pm
$, hence in general; (4) follows in the same way by using Proposition \ref{l2} c). Finally, in (2) the implication $\Rightarrow$ is trivial; for the converse, the unit $\un\to M^\vee\otimes M$ and the counit $M\otimes M^\vee\to \un$ become inverse isomorphisms after applying $\omega$, hence are inverse isomorphisms by faithfulness.
\end{proof}

\begin{rk}\label{r2} By Example \ref{ex1} a), Proposition \ref{l2} c) and Lemma \ref{l4} (3) give an elementary proof of \cite[Lemma 2.1.5 (2)]{andreslope}. Similarly, Proposition \ref{p0} a) gives an elementary proof of part (1) of this lemma.
\end{rk}

\emph{In the sequel, we assume that $\sA$ is pre-tannakian.}

\begin{lemma}\label{l3} Let $\sE:0\to M'\by{i} M\by{p} M''\to 0$ be a $0$ sequence in $\sA$, with $i$ a mono and $p$ an epi. Then we have
\begin{equation}\label{eq1}
\dim^\pm(M) \ge \dim^\pm(M')+\dim^\pm(M'')
\end{equation}
with equality if and only if $\omega(\sE)$ is exact for some ($\iff$  any) fibre functor $\omega:\sA\to \Vec_K^\pm$.
\end{lemma}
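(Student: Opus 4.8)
The plan is to transport the whole question into $\Vec_K^\pm$ via a fibre functor, where super-dimensions become ordinary dimensions of the even and odd parts and every short exact sequence splits. Since $\sA$ is pre-tannakian, choose a fibre functor $\omega:\sA\to\Vec_K^\pm$; by Proposition \ref{l2} c) it preserves both monomorphisms and epimorphisms, so $\omega(i)$ is injective and $\omega(p)$ is surjective. Applying $\omega$ to $\sE$ yields a $0$-sequence
\[
0\to\omega(M')\by{\omega(i)}\omega(M)\by{\omega(p)}\omega(M'')\to 0
\]
of finite-dimensional super-vector spaces in which the first arrow is injective, the last is surjective, and the composite vanishes. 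Thus exactness at the two ends is automatic, the only issue being exactness in the middle, and the relation $\omega(p)\circ\omega(i)=0$ gives the inclusion $\operatorname{im}\omega(i)\subseteq\Ker\omega(p)$.

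Next I would run the dimension count in $\Vec_K^\pm$, using that $\dim^\pm$ is additive on short exact sequences there (every such sequence splits, cf. Lemma \ref{l4} (1)). Injectivity of $\omega(i)$ gives $\dim^\pm(M')=\dim^\pm(\operatorname{im}\omega(i))$, and surjectivity of $\omega(p)$ gives $\dim^\pm(\Ker\omega(p))=\dim^\pm(M)-\dim^\pm(M'')$. Because $\operatorname{im}\omega(i)$ is a sub-super-space of $\Ker\omega(p)$, its super-dimension is termwise $\le$ that of $\Ker\omega(p)$, with equality precisely when the two subspaces coincide. Combining these three observations yields $\dim^\pm(M')\le\dim^\pm(M)-\dim^\pm(M'')$, which is exactly \eqref{eq1}; and equality in \eqref{eq1} holds if and only if $\operatorname{im}\omega(i)=\Ker\omega(p)$, i.e.\ if and only if $\omega(\sE)$ is exact.

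For the ``some $\iff$ any'' clause I would invoke Lemma \ref{l4}: the three super-dimensions $\dim^\pm(M')$, $\dim^\pm(M)$, $\dim^\pm(M'')$ are independent of $\omega$, so both sides of \eqref{eq1}, and therefore the condition of equality, are intrinsic to $\sA$. By the previous paragraph this single intrinsic condition is equivalent, for each individual fibre functor, to exactness of $\omega(\sE)$; hence exactness for one $\omega$ forces it for all. I do not expect a genuine obstacle here: the only non-formal ingredient is that $\omega$ preserves monomorphisms \emph{and} epimorphisms simultaneously, which is exactly where the two-sided pre-tannakian hypothesis enters through Proposition \ref{l2} c); the remaining steps are a routine splitting computation in the semisimple category $\Vec_K^\pm$.
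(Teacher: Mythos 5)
Your proof is correct and is essentially the paper's argument: the paper's entire proof is the one-line citation ``This follows from Proposition \ref{l2} c)'', and your write-up is exactly the expansion of that citation --- transport $\sE$ through a fibre functor that preserves monos and epis, run the subspace/dimension count in $\Vec_K^\pm$, and use the $\omega$-independence of $\dim^\pm$ (Lemma \ref{l4}) to get the ``some $\iff$ any'' clause. No gaps.
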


\begin{proof} This follows from Proposition \ref{l2} c).
\end{proof}

\begin{defn}\label{d5} A sequence $\sE$ as in Lemma \ref{l3} is \emph{dim-exact} if \eqref{eq1} is an equality.
\end{defn}

\begin{prop}\label{p4}  Assume that $\sA$ is abelian and that its tensor product is right exact (e.g. that $\sA$ has internal Homs). Then $\dim^\pm(M)\le \dim^\pm(M')+\dim^\pm(M'')$ for any exact sequence $\sE:M'\to M\to M''\to 0$.
\end{prop}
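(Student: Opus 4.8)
The plan is to reduce $\sE$ to a genuine short exact sequence, then to deduce the desired inequality from the vanishing of a single Schur functor of $M$, which I would establish through the classical filtration of $S_\lambda$ of an extension. The point is that only the surjectivity onto the graded pieces of that filtration is needed, and this survives under the mere right-exactness of $\otimes$.

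First I would set $K=\operatorname{im}(i)=\Ker(p)$, so that $0\to K\to M\by{p} M''\to 0$ is short exact while $M'\surj K$ is an epimorphism. As $\sA$ is pre-tannakian, hence right pre-tannakian, Lemma \ref{l4}(3) gives $\dim^\pm(K)\le\dim^\pm(M')$, so it suffices to prove $\dim^\pm(M)\le\dim^\pm(K)+\dim^\pm(M'')$. Writing $\dim^\pm(K)=a|b$ and $\dim^\pm(M'')=m''|n''$, the rectangle criterion recalled in the proof of Lemma \ref{l4} (\cite[Cor. 1.9]{dtens}) turns this into the single assertion $S_R(M)=0$, where $R$ is the rectangular partition with $b+n''+1$ rows and $a+m''+1$ columns: indeed $S_R(M)=0$ forces the $(t+1)\times(s+1)$ rectangle attached to $\dim^\pm(M)=s|t$ to sit inside $[R]$, i.e. $s\le a+m''$ and $t\le b+n''$.

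The core step is to produce, for each $n$ and each partition $\lambda$ of $n$, a finite filtration of $S_\lambda(M)$ whose subquotients are quotients of $\bigoplus_{\mu,\nu}(S_\mu(K)\otimes S_\nu(M''))^{\oplus c^\lambda_{\mu\nu}}$. I would filter $M^{\otimes n}$ by the $\fS_n$-stable subobjects $F_j=\sum_{|S|=j}\operatorname{im}\bigl(K^{\otimes S}\otimes M^{\otimes S^c}\to M^{\otimes n}\bigr)$, with $S$ ranging over the $j$-element subsets of $\{1,\dots,n\}$. For fixed $S$ and a slot $i\notin S$, right-exactness of $\otimes$ identifies the kernel of $K^{\otimes S}\otimes M^{\otimes S^c}\surj K^{\otimes S}\otimes (M'')\otimes M^{\otimes(S^c\setminus i)}$ with the term carrying a $K$ in slot $i$, which lies in $F_{j+1}$; iterating over $i$ shows that $F_j/F_{j+1}$ is a quotient of $\bigoplus_{|S|=j}K^{\otimes S}\otimes(M'')^{\otimes S^c}$. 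This last object is $\Ind_{\fS_j\times\fS_{n-j}}^{\fS_n}\bigl(K^{\otimes j}\boxtimes (M'')^{\otimes(n-j)}\bigr)$; since $\Q[\fS_n]$ is semisimple, applying the Young idempotent defining $S_\lambda$ is exact, commutes with $F_\bullet$, and turns it into $\bigoplus_{\mu,\nu}(S_\mu(K)\otimes S_\nu(M''))^{\oplus c^\lambda_{\mu\nu}}$ by the direct-sum decomposition of Schur functors. Taking $\lambda=R$, I then invoke that for a rectangle $c^R_{\mu\nu}\ne0$ forces $\nu$ to be the complement of $\mu$ in $R$, whence $\mu_{b+1}+\nu_{n''+1}=a+m''+1$. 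If both $S_\mu(K)\ne0$ and $S_\nu(M'')\ne0$, the rectangle criterion gives $\mu_{b+1}\le a$ and $\nu_{n''+1}\le m''$, so $\mu_{b+1}+\nu_{n''+1}\le a+m''$, a contradiction; hence one factor vanishes and $S_\mu(K)\otimes S_\nu(M'')=0$. Every subquotient of the filtration of $S_R(M)$ is thus a quotient of $0$, so $S_R(M)=0$.

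The main obstacle is precisely this core step: checking that elementary right-exactness of $\otimes$ suffices to describe the graded pieces of the $K$-adic filtration up to surjection (the exact version of this is the familiar fact that Schur functors send extensions to objects filtered by Littlewood–Richardson pieces). Because the endgame only requires the subquotients to be quotients of the $S_\mu(K)\otimes S_\nu(M'')$, right-exactness is enough and no stronger hypothesis on $\otimes$ is needed; the combinatorial vanishing and the reduction to the short exact sequence are then routine.
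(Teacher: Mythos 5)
Your proof is correct, and its core device is the same as the paper's: a finite filtration of $M^{\otimes n}$ whose graded pieces, under mere right-exactness of $\otimes$, are \emph{quotients} of the induced objects $\Ind_{\fS_j\times\fS_{n-j}}^{\fS_n}\bigl(K^{\otimes j}\otimes (M'')^{\otimes (n-j)}\bigr)$, hence, after cutting by the Young idempotent (exact in a $\Q$-linear abelian category), quotients of the Littlewood--Richardson pieces $\bigoplus_{\mu,\nu}(S_\mu\otimes S_\nu)^{\oplus c^\lambda_{\mu\nu}}$. The paper builds this filtration by iterated tensoring of right-exact sequences, following \cite[bot. p. 232 and Prop. 1.8]{dtens}, rather than via your sums of images $F_j$, but the two constructions amount to the same thing, and your iteration over the slots $i\notin S$ (passing through the epimorphism from the term with $M$ in slot $i$ at each stage) is sound. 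Where you genuinely diverge is the endgame. The paper works directly with $M'\to M\to M''\to 0$ (no reduction to $K=\Ker(p)$) and, for \emph{every} partition $\lambda$, compares $\bigoplus_p [M',M'']_p$ with $S_\lambda(M'\oplus M'')$ through a fibre functor $\omega$, concluding $S_\lambda(M'\oplus M'')=0\Rightarrow S_\lambda(M)=0$ and then $\dim^\pm(M)\le \dim^\pm(M'\oplus M'')$ by additivity; this avoids all explicit Littlewood--Richardson combinatorics. You instead first pass to the short exact sequence via Lemma \ref{l4} (3) (legitimate, since $\sA$ is pre-tannakian here), then test a \emph{single} rectangle $R$ with $b+n''+1$ rows and $a+m''+1$ columns, invoking the classical fact that $c^R_{\mu\nu}\ne 0$ forces $\nu$ to be the $180^\circ$-rotated complement of $\mu$ in $R$, so that the rectangle criterion of \cite[Cor. 1.9]{dtens} gives $\mu_{b+1}+\nu_{n''+1}=a+m''+1$ and kills each graded piece outright (your edge cases, e.g. $K=0$ or $M''=0$, check out). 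Your route buys a one-shot vanishing $S_R(M)=0$ at the cost of a nontrivial, though standard, combinatorial input on rectangular Littlewood--Richardson coefficients that should be cited; the paper's route buys combinatorial weightlessness at the cost of quantifying over all $\lambda$. Both arguments correctly isolate right-exactness of $\otimes$ as exactly what is needed to retain surjectivity onto the graded pieces.
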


\begin{proof} (cf. \cite[proof of Prop. 1.19]{dtens}.) Let first $\sE_i:M'_i\to M_i\to M''_i\to 0$ be a family of exact sequences, for $i\in\{1,\dots,n\}$. Using the right exactness of the tensor product, a standard diagram chase yields an exact sequence
\[M'_1\otimes M_2\oplus M_1\otimes M'_2\to M_1\otimes M_2\to M''_1\otimes M''_2\to 0\]
hence, by induction, an exact sequence
\begin{multline*}
M'_1\otimes M_2\otimes\dots \otimes M_n \oplus M_1\otimes M'_2\otimes \dots \otimes M_n\oplus\dots\oplus  M_1\otimes M_2\otimes \dots \otimes M'_n\\
\to M_1\otimes\dots \otimes  M_n\to M''_1\otimes\dots \otimes  M''_n\to 0.
\end{multline*}
Taking $\sE_1=\dots =\sE_n= \sE$, we thus get an exact sequence
\begin{multline}\label{eq3}
M'\otimes M^{\otimes n-1} \oplus M\otimes M'\otimes  M^{\otimes n-2}\oplus\dots\oplus  M^{\otimes n-1} \otimes M'\\
\to M^{\otimes n}\to M''^{\otimes n}\to 0.
\end{multline}

For the action of $\fS_n$ on \eqref{eq3} by permutations, the first term is isomorphic to $\Ind_{\fS_{n-1}}^{\fS_n}(M'\otimes M^{\otimes n-1})$. Iterating, we see that $M^{\otimes n}$ admits a $\fS_n$-equivariant filtration of length $n+1$ whose $p$-th quotient  is isomorphic to $\Ind_{\fS_p\times \fS_{n-p}}^{\fS_n}( {M'}^{\otimes p}\otimes {M''}^{\otimes n-p})$, except for $p=n$ where it is only a quotient of ${M'}^{\otimes n}$, cf. \cite[bot. p. 232]{dtens}. As in loc. cit., Prop. 1.8, it follows that if $\lambda$ is a partition of $n$, $S_\lambda(M)$ has a filtration of length $n+1$ whose $p$-th quotient is isomorphic to
\[[M',M'']_p=\bigoplus_{\mu,\nu} (S_\mu(M')\otimes S_\nu(M''))^{[\lambda:\mu,\nu]}\]
where $|\mu|=p$, $|\nu|=n-p$, except for $p=n$ when it is only a quotient of $S_\lambda(M')$. (Here, $[\lambda:\mu,\nu]$ is the multiplicity described in \cite[1.5]{dtens}.)

Now we have, by \cite[Prop. 1.8]{dtens}:
\[\omega\left(\bigoplus_{p=0}^n [M',M'']_p\right) \simeq S_\lambda(\omega(M'\oplus M'')) \simeq \omega(S_\lambda(M'\oplus M''))\]
hence
\begin{multline*}
S_\lambda(M)\ne 0 \Rightarrow \bigoplus_{p=0}^n [M',M'']_p\ne 0 \Rightarrow \omega\left(\bigoplus_{p=0}^n [M',M'']_p\right)\ne 0\\
\iff \omega(S_\lambda(M'\oplus M''))\ne 0 \Rightarrow S_\lambda(M'\oplus M'')\ne 0
\end{multline*}
whence $\dim^\pm(M)\le \dim^\pm(M'\oplus M'')=\dim^\pm(M')+\dim^\pm(M'')$ (Lemma \ref{l4} (1)), as desired.
\end{proof}

\subsection{Main theorem} Recall that $\sA$ is pre-tannakian. Let $\sE$ be as in Lemma \ref{l3}.

\begin{thm}\label{p2} a)  For any faithful functor $F:\sA\to \sB$ (with $\sB$ pre-tannakian) $\sE$ dim-exact (Definition \ref{d5}) $\iff$ $F(\sE)$  dim-exact.\\
b) Suppose that $i$ has a cokernel. If $\sE$ is dim-exact, so is the sequence $0\to M'\by{i} M\to \Coker i\to 0$, and the induced morphism $\Coker i\to M''$ is mono-epi. Same statement, mutatis mutandis, assuming that $p$ has a kernel.\\
c) If $\sA$ is abelian,  dim-exact $\Rightarrow$ exact, and the converse is true if its tensor structure is right exact.\\
d) Let $\sA,\sB$ be abelian pre-tannakian categories. If the tensor structure of $\sA$ is right exact, any faithful $\otimes$-functor $F:\sA\to \sB$ is exact.
\end{thm}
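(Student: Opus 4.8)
The plan is to deduce exactness of $F$ entirely from the equivalences between exactness and dim-exactness furnished by parts c) and a) of the theorem itself. Since $F$ is a $\Q$-linear $\otimes$-functor it is additive, so it suffices to check that $F$ carries every short exact sequence of $\sA$ to a short exact sequence of $\sB$; this is the standard criterion for an additive functor between abelian categories to be exact.

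First I would take a short exact sequence $\sE:0\to M'\by{i} M\by{p} M''\to 0$ in $\sA$. Because $\sA$ is abelian and its tensor structure is right exact, part c) applies in full: exactness of $\sE$ is equivalent to dim-exactness in the sense of Definition \ref{d5}. As $\sE$ is exact by hypothesis, it is dim-exact.

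Next I would transport dim-exactness across $F$. Since $F$ is faithful between pre-tannakian categories, Proposition \ref{l2} c) guarantees that $F$ preserves monomorphisms and epimorphisms, so $F(i)$ is mono and $F(p)$ is epi; together with $F(p)F(i)=F(pi)=0$ this exhibits $F(\sE)$ as a $0$-sequence of the type for which dim-exactness is defined. Part a), applicable because $F$ is faithful and $\sB$ is pre-tannakian, then shows that $F(\sE)$ is itself dim-exact.

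Finally, since $\sB$ is abelian, the implication ``dim-exact $\Rightarrow$ exact'' of part c) applied to $\sB$ (this direction requires no hypothesis on the tensor product of $\sB$) gives that $F(\sE)$ is exact. Hence $F$ preserves short exact sequences and is therefore exact. The argument is simply a chaining of c), a), c), so I do not expect any real obstacle here: all the genuine difficulty has already been absorbed into the super-dimension estimate of Proposition \ref{p4} and into the equivalences proved in parts a) and c). The only point demanding care is the verification that $F(\sE)$ is a legitimate sequence on which dim-exactness makes sense, which is precisely what the preservation of monomorphisms and epimorphisms in Proposition \ref{l2} c) supplies.
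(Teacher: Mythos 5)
Your handling of part d) is correct and coincides exactly with the paper's route: the paper disposes of d) in one line (``d) now follows from a) and c)''), and your chain --- exact $\Rightarrow$ dim-exact in $\sA$ by the converse direction of c) (using right exactness of $\otimes_\sA$), then $F(\sE)$ dim-exact by a), then exact in $\sB$ by the direct implication of c) (which, as you correctly note, needs no hypothesis on the tensor structure of $\sB$) --- is precisely that argument. Your explicit check that $F(\sE)$ is a legitimate $0$-sequence with $F(i)$ mono and $F(p)$ epi, via Proposition \ref{l2} c), is a point the paper absorbs into the proof of a); making it explicit is sound.

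The genuine gap is one of scope: the statement comprises parts a)--d), and your proposal proves only d) while invoking a) and c) ``of the theorem itself'' as inputs, which is circular as a proof of the full statement. The substantive content sits exactly in what you skip. For a), one needs that $\dim^\pm$ is preserved by faithful functors (Lemma \ref{l4}) together with Proposition \ref{l2} a), b), so that both $\sE$ and $F(\sE)$ satisfy inequality \eqref{eq1} with the same three super-dimensions, making the two equalities equivalent. For b), the epimorphism $\Coker i\to M''$ gives $\dim^\pm\Coker i\ge\dim^\pm M''$ by Lemma \ref{l4} (3); dim-exactness of $\sE$ forces equality, yielding dim-exactness of $0\to M'\to M\to\Coker i\to 0$, and Lemma \ref{l4} (4) then makes $\Coker i\to M''$ mono-epi. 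For c), the direction ``dim-exact $\Rightarrow$ exact'' follows from b) because in an abelian category a mono-epi is an isomorphism, so $\Coker i\iso M''$; the converse is exactly Proposition \ref{p4}, i.e.\ the subadditivity $\dim^\pm(M)\le\dim^\pm(M')+\dim^\pm(M'')$ proved via the $\fS_n$-equivariant filtration of $M^{\otimes n}$ and Schur functors --- and this is the sole place where right exactness of $\otimes_\sA$ is actually used. None of this appears in your write-up, so as it stands the proof of the theorem is incomplete even though the d)-step you do supply is flawless.
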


\begin{proof} a) follows from Proposition \ref{l2} a) and b), and Lemma \ref{l4}.

b) The morphism $\Coker i\to M''$ is clearly epi, hence $\dim^\pm \Coker i \ge \dim^\pm M''$ by Lemma \ref{l4} (3); if $\sE$ is dim-exact, we must have equality, which shows the first claim. The second one follows from Lemma \ref{l4} (4). Same argument assuming that $p$ has a kernel.


In c), $\Rightarrow$ follows from b) and $\Leftarrow$ follows from Proposition \ref{p4}. 


d) now follows from a) and c).
\end{proof}
\bigskip

\begin{rks} a) The proof of Theorem \ref{p2} d) is inspired from the key idea in the proof of \cite[Lemma 10.7]{os}. We recover this lemma when $\sA$ and $\sB$ are rigid with $\End(\un)$ a field, see Proposition \ref{p0} c) and Example \ref{ex1} b).\\
b) I don't know if the right exactness hypothesis is necessary in Theorem \ref{p2} c) and d) (see Proposition \ref{l2} d)); it holds for the abelian envelope $\Ab(\sA)$ of an additive $\otimes$-category $\sA$ from \cite[Prop. 1.8 and 1.10]{bhp} (but $\Ab(\sA)$ may not be pre-tannakian even if $\sA$ is.)\\
c) I don't know if Theorem \ref{p2} c) and d) extend to the quasi-tannakian categories of \cite{andreslope} (see Example \ref{ex1}).
\end{rks}

\enlargethispage*{20pt}


\begin{thebibliography}{I}
\bibitem{andreslope} Y. André {\it Slope filtrations}, Confluentes Math. {\bf 1} (2009) 1--85.
\bibitem{bhp} L. Barbieri-Viale, A. Huber, M. Prest {\it Tensor structure for Nori motives},  Pacific J. Math. {\bf 306}, 2020, 1--30.
\bibitem{coul1} K. Coulembier {\it Additive Grothendieck pretopologies and presentations of tensor categories}, preprint, 2020, \url{https://arxiv.org/abs/2011.02137}. 
\bibitem{coul2} K. Coulembier, P. Etingof, V. Ostrik, B. Pauwels {\it Monoidal Abelian Envelopes with a quotient property}, preprint, 2021, \url{https://arxiv.org/abs/2103.00094}.
\bibitem{dm} P. Deligne, J. S. Milne {\it Tannakian categories}, {\it in} Hodge cycles, motives, and Shimura varieties, Lect. Notes in Math. {\bf 900}, Springer, 1982, 101--228.
\bibitem{dfest} P. Deligne {\it Catégories tannakiennes}, {\it in} The Grothendieck Festschrift, Vol. II,  
Progr. Math. {\bf 87}, Birkhäuser, 1990, 111--195. 
\bibitem{dtens} P. Deligne {\it Cat\'egories tensorielles}, Moscow Math. J. {\bf 2} (2002), 227--248.
\bibitem{os} P. O'Sullivan {\it Super Tannakian hulls}, preprint (2020), \url{https://arxiv.org/abs/2012.15703}.
\end{thebibliography}
\end{document}